\documentclass[11pt]{article}

\usepackage[margin=1.0in]{geometry}
\usepackage{amsmath,amssymb,amsthm}
\usepackage{booktabs}
\usepackage{array}
\usepackage{longtable}
\usepackage{xcolor}
\usepackage{tikz}
\usetikzlibrary{decorations.markings}
\tikzset{
  fwd/.style={postaction={decorate},decoration={markings,
      mark=at position 0.58 with {\arrow{stealth}}}},
  bwd/.style={postaction={decorate},decoration={markings,
      mark=at position 0.58 with {\arrow{stealth}}}}
}
\usepackage[colorlinks=true,linkcolor=blue!60!black,citecolor=blue!60!black,
            urlcolor=blue!60!black]{hyperref}
\hypersetup{
  pdftitle={The no-flow curves in a family of Lagrangian-Eulerian schemes are
            the space-time integral curves of the Locally Conservative
            Eulerian-Lagrangian Method},
  pdfauthor={Frederico Furtado, Felipe Pereira, Li-Ming Yeh},
  pdfsubject={Comment on arXiv:2106.08363v3 [math.NA]},
  pdfkeywords={Lagrangian-Eulerian schemes; no-flow curves; space-time integral
               curves; integral tubes; local mass conservation; locally
               conservative Eulerian-Lagrangian method; hyperbolic conservation
               laws}}

\newtheorem{proposition}{Proposition}
\theoremstyle{remark}
\newtheorem{remark}{Remark}

\newcommand{\R}{\mathbb{R}}
\newcommand{\dd}{\,\mathrm{d}}

\title{\bfseries The no-flow curves in a family of Lagrangian--Eulerian
schemes are the space--time integral curves of the Locally Conservative
Eulerian--Lagrangian Method\\[7pt]
{\large Comment on arXiv:\hspace{0pt}2106.08363v3 [math.NA]}\\[5pt]
{\normalsize E.~Abreu, A.~Esp\'{\i}rito Santo, W.~Lambert, and J.~P\'erez,
\emph{Convergence of a Lagrangian--Eulerian scheme by a weak asymptotic
analysis for one-dimensional hyperbolic problems}, last revised
2 February 2022;\\[3pt]
published as \emph{Convergence, bounded variation properties and Kruzhkov
solution of a fully discrete Lagrangian--Eulerian scheme via weak asymptotic
analysis for 1D hyperbolic problems},\\
Numer.\ Methods Partial Differential Equations \textbf{39} (2023) 2400--2443,
doi:10.1002/num.22972}}

\author{%
Frederico Furtado\thanks{Department of Mathematics \& Statistics,
University of Wyoming, Ross Hall, 1000 E.\ University Ave, Laramie,
WY 82071, U.S.A.}
\and
Felipe Pereira\thanks{Department of Mathematical Sciences,
The University of Texas at Dallas, Richardson, TX 75080, U.S.A.
(\texttt{luisfelipe.pereira@utdallas.edu}). Corresponding author.}
\and
Li-Ming Yeh\thanks{Department of Applied Mathematics, National Yang Ming
Chiao Tung University, 1001 Ta Hsueh Road, Hsinchu 30010, Taiwan, ROC.}}

\date{}

\begin{document}
\maketitle

\begin{abstract}
This Comment concerns the work posted as arXiv:\hspace{0pt}2106.08363v3
[math.NA] and published, under a different title, as Abreu, Esp\'{\i}rito
Santo, Lambert, and P\'erez, \emph{Numer.\ Methods Partial Differential
Equations} \textbf{39} (2023) 2400--2443; all quotations and all page,
section, and equation references below are to the published version.  That
article builds its scheme on space--time control volumes whose lateral
boundaries it calls \emph{no-flow curves}, solving $\dot\sigma=H(u)/u$, and
presents this concept as new: its abstract invokes ``the improved concept of
no-flow curves, as introduced by the authors.''  We show that the continuous
object so named is the \emph{space--time integral curve} of the locally
conservative Eulerian--Lagrangian method of Douglas, Pereira, and Yeh
[Comput.\ Geosci.\ \textbf{4} (2000) 1--40], and that the no-flow region is
the space--time tube it bounds: after the scalar dictionary is applied, the two
curve equations, the zero-flux property of the lateral boundaries, and the
exact local balance coincide term by term, and the nonlinear scalar
forward-tracked form is already that of Mancuso, Pereira, and de Souza
[TEMA \textbf{8} (2007) 269--276, 277--286].

What is at issue is not that a control volume is transported, which is generic
to moving-mesh methods, but \emph{which} curve transports it.  For a scalar
conservation law no velocity is given by the model, and the two candidate
families --- the characteristic $\dot x=H'(u)$ and the flux-to-state ratio
$\dot x=H(u)/u$ --- are distinct speed functions, coinciding identically on an
interval of states only for a linear flux $H(u)=cu$.  Where $u\neq0$, the
flux-to-state ratio is precisely the condition of zero normal mass flux, so
local conservation on the resulting tube is an identity of the geometry rather
than a property of a discretization.  A short
proposition records that equivalence, under the normalization $H(0)=0$.
Making and arguing that choice, against the characteristic alternative, is the
content of Section~5 of Douglas, Pereira, and Yeh, and it is recognized as
prior art in the same field by unrelated authors.  The article under comment
itself states (p.~2401) that this work was ``the first work in the literature
to introduce a space--time local conservation,'' with the region called the
integral tube and bounded by integral curves.  Our conclusion is that these
two statements concern the same object.  Nor is that article an isolated
case.  In the full-text corpus documented here, the same construction is
principally called the integral tube and credited to Douglas, Pereira, and Yeh
through the HYP2018 proceedings, published in 2020; the phrase ``no flow
curves'' already appears in 2019 for the boundaries of those same tubes; from
2021 on, that label becomes dominant, and the construction is described as
substantially different from, or presented as introduced by, the later
authors; papers published in 2025 and 2026 use the construction while citing
later papers, but not Douglas, Pereira, and Yeh; and a different 2026 paper
gives mixed attribution, calling the no-flow curve an ``extension'' of the DPY
integral curve while its own equations show that the two curves are identical
and that diffusion and dispersion do not modify them.  The continuous
identification made here applies to every paper tabulated.  It leaves untouched
the article's fully discrete non-staggered scheme, its weak CFL condition, and
its analysis of that scheme by the weak asymptotic method of Danilov,
Omel'yanov, and Shelkovich.
\end{abstract}

\noindent\textbf{Keywords:} Lagrangian--Eulerian schemes; no-flow curves;
space--time integral curves; integral tubes; local mass conservation;
locally conservative Eulerian--Lagrangian method; hyperbolic conservation
laws.

\medskip
\noindent\textbf{MSC (2020):} 65M25; 35L65; 65M08; 76S05.

\section{Purpose and scope}\label{sec:intro}

\paragraph{Work under comment and citation convention.}
This Comment concerns the work posted by E.~Abreu, A.~Esp\'{\i}rito Santo,
W.~Lambert, and J.~P\'erez as arXiv:\hspace{0pt}2106.08363v3 [math.NA], last
revised 2 February 2022, under the title \emph{Convergence of a
Lagrangian--Eulerian scheme by a weak asymptotic analysis for one-dimensional
hyperbolic problems}.  That is the arXiv record to which this Comment is
intended to be linked.  The same work was subsequently published, under the
different title \emph{Convergence, bounded variation properties and Kruzhkov
solution of a fully discrete Lagrangian--Eulerian scheme via weak asymptotic
analysis for 1D hyperbolic problems}, as \cite{AESLP2023}.  Because the
published article is the version of record, \emph{this Comment refers
throughout to the published paper}: the citation \cite{AESLP2023}, every
quotation, and every page, section, and equation number below are taken from
\emph{Numer.\ Methods Partial Differential Equations} \textbf{39} (2023)
2400--2443, doi:10.1002/num.22972, and not from the preprint.  The arXiv
identifier and preprint title serve only to identify the work and to locate
this Comment on the arXiv record.

\paragraph{The construction at issue, and two statements about it.}
The article under comment constructs and analyzes a fully discrete
Lagrangian--Eulerian scheme for the scalar problem
\begin{equation}\label{eq:ivp}
\frac{\partial u}{\partial t}+\frac{\partial H(u)}{\partial x}=0,
\qquad x\in\R,\ t>0,\qquad u(x,0)=u_0(x).
\end{equation}
Its space--time control volumes $D^{n,n+1}_j$ have lateral boundaries
$\sigma^n_{j\pm1/2}$ that solve $\dot\sigma=H(u)/u$, are described as
``naturally impervious zero-flux boundaries,'' and are called \emph{no-flow
curves}; the mass inside each control volume is therefore exactly conserved.
The article presents this object as new.  Its abstract states that the
approach ``is based on the improved concept of no-flow curves, as introduced
by the authors''; pages~2401 and 2405 describe a formulation ``based on the new and
substantial improvement interpretation of the integral tube'' now ``called
no-flow curves''; and page~2402 refers to ``the new concept no-flow curves''
as ``substantially different in theory foundations'' from the Locally
Conservative Eulerian--Lagrangian Method (LCELM) of Douglas, Pereira, and Yeh
\cite{DPY2000}.

The same article also states, on page~2401, that \cite{DPY2000} was ``the
first work in the literature to introduce a space--time local conservation,''
with the corresponding region called the \emph{integral tube} and bounded by
\emph{integral curves}, and records that its own Section~2 uses ``the same
mass conservation property'' as \cite{DPY2000}.  The purpose of this Comment
is to establish that these two groups of statements are about one and the same
continuous object, and that the second is consistent with the documented
lineage.

\paragraph{A changing attribution.}
The article under comment states the claim most explicitly, but it does not
state it alone, and the record is worth setting out at the start rather than
leaving to a chronology at the end.  Table~\ref{tab:attrib} is a defined
documentary corpus: it includes papers by overlapping author teams whose full
texts we examined and in which the same curve equation, zero-flux boundary, or
integral-tube construction appears.  For each, the table reproduces the
relevant attribution language.\footnote{The attribution of prior ideas is
not a discretionary courtesy but a norm of scholarly publishing. Wiley, the
publisher of the journal in which the paper under comment appeared, states
that the use of another person's ideas, data, or text requires proper
attribution, and identifies among violations of publication ethics the
failure to give due credit to prior work and the neglect of related work in a
manner that increases the apparent novelty of the results
\cite{WileyEthics}. The European Code of Conduct for Research Integrity
likewise requires authors to acknowledge important work and contributions of
others, identifies selective or inaccurate citation as an unacceptable
practice, and requires proper credit to the original source when the work or
ideas of others are used \cite{ALLEA2023}. The present Comment makes no
allegation of misconduct: it establishes a mathematical identification in
Section~\ref{sec:three} and Proposition~\ref{prop:flux}, and records in
Table~\ref{tab:attrib} the citation practice that, in the authors' view,
follows from it.}  It is not presented as an exhaustive
bibliography of every application.  Three phases are visible in the verified
corpus.

Through the HYP2018 proceedings, published in 2020, the region is principally
called the \emph{integral tube}, and the construction is credited to
\cite{DPY2000}.  The 2017 journal paper \cite{ALPS2017} and the 2017 CNMAC
paper \cite{APS-CNMAC2017} both identify \cite{DPY2000} as the work in which
the relevant space--time mass-conservation region was established for a
scalar convection-dominated nonlinear parabolic problem.  The 2018 CNMAC paper
\cite{APS-CNMAC2018} and the HYP2018 proceedings paper
\cite{ALPS-HYP2020} again define the same parameterized integral curves and
zero-flux volumes while citing \cite{DPY2000}; Figure~1 of the latter is
captioned ``The Integral tube.''  The 2019 paper \cite{AbreuPerez2019} states
plainly that the work uses ``the concept of integral tubes (or space--time
tubes) introduced in'' \cite{DPY2000}.  Its terminology is already
transitional: the caption of Figure~5 describes ``the first-order
approximation of the no flow curves of the integral tubes.''  Thus the phrase
\emph{no flow curves} does not first appear in 2021.  From 2021 it becomes
the dominant label, and the attribution changes:
\cite[p.~6]{AMPR2021} records that ``in the terminology discussed in
[24,25], the region $D^n_j$ is called integral tube, which is substantially
different from our recent developments''; reference [25] there is
\cite{DPY2000}, while [24] is \cite{DouglasHuang2001}.  The article under
comment describes the concept as ``introduced by the authors.''  In 2025 the
construction is used under the new name in \cite{AdlCJL2025} with no reference
to \cite{DPY2000} at all; a reader wanting ``a detailed description of no-flow
curves'' is referred there to \cite{AdlCLP2022}.  The 2026 application paper
\cite{AFFG2026} likewise contains no reference to DPY: it calls its
semi-discrete formulation ``recently introduced,'' says that the method is
based on ``no-flow'' curves while citing \cite{AbreuPerez2019}, and uses the
same componentwise flux-to-state ratios.  A different 2026 paper,
\cite{ALL2026dd}, restores an explicit reference to DPY, but describes the
no-flow curve as a natural \emph{extension} of the integral curve ``originally
introduced by Douglas, Pereira, and Yeh.''  Its own Definition~2.1 and
equations (9) and (14) instead give exactly the same zero-flux curve: the
space--time field is $(H(k(x),u),u)^{\mathrm T}$ and
$\dot\sigma=H(k(\sigma),u)/u$.  The paper then states that its diffusive and
dispersive terms do not modify the vector field that defines those curves.
What is extended is therefore the equation, scheme, and analysis, not the
continuous curve.

Two features of the sequence of articles in Table 2 should be stated precisely, because they are
easily misdescribed.  First, \cite{DPY2000} is not generally omitted: it is
cited in every paper tabulated except \cite{AdlCJL2025,AFFG2026}; in
\cite{AbreuPerez2019} and \cite{APL2025bal} it is named as a source of the
construction, and \cite{ALL2026dd} names DPY as the source of the integral
curve but casts the identical no-flow curve as an extension.  The second feature is what DPY
is cited for matters.  In \cite[p.~3]{AFLP-JCAM2022} the lateral boundaries are
``two special curves [27] (here under the name no-flow curves [6])'';
references [27] and [6] there are \cite{DPY2000} and
\cite{AbreuPerez2019}, respectively.  On the same page, however, the paper
presents a scheme based on ``the novel concept of no-flow curves recently
introduced in the literature [5,6,28],'' all three references being to papers
by overlapping author teams.  The question raised by the sequence is
therefore not only one of missing references.  More commonly, a construction
that is cited is at the same time presented as new.

\paragraph{Plan.}
Section~\ref{sec:three} places the defining equations of three constructions
side by side --- \cite{DPY2000} (2000), \cite{MPS2007,MPS2007b} (2007), and
\cite{AESLP2023} (2023) --- and shows that, in the scalar source-free setting,
they are the same construction in different notation.
Section~\ref{sec:which} isolates what is specific about that construction and
therefore what it means to introduce it: not that a control volume moves, but
which curve moves it.  Section~\ref{sec:prop} records, in one proposition with
a two-line proof, why the flux-to-state ratio is exactly the zero-flux
condition; this is a standard computation, offered as a clarification and not
as a new theorem, and it carries a normalization caveat that we state
explicitly.  Section~\ref{sec:chronology} follows the terminology, and
Section~\ref{sec:new} sets out what is genuinely new in \cite{AESLP2023}.

Three limitations of scope are stated at the outset and are not relaxed
anywhere below.
\begin{itemize}
\item[(i)] Our claim is confined to the Eulerian--Lagrangian literature for
transport in porous media and for hyperbolic conservation laws --- the arena
in which \cite{AESLP2023} situates itself and from which it draws the
antecedents it cites.  Section~\ref{sec:which} explains why the generic
moving-mesh, or arbitrary Lagrangian--Eulerian, template is a different kind
of statement and is not what is at issue; beyond the specific points made
there, we do not survey mass-coordinate, particle, front-tracking, or
stream-tube methods.
\item[(ii)] The correspondence we assert is between \emph{defining equations
of continuous constructions}.  We prove no existence, uniqueness, or selection
result for the curves, and we make no claim about the discrete curves produced
by the respective algorithms, which select values of $u$ on the curves by
different reconstructions.
\item[(iii)] Nothing here questions the correctness of the numerical method
or of the convergence analysis of \cite{AESLP2023}.  We do note, in
Remark~\ref{rem:vacuum}, where the hypotheses of that analysis exclude the
vacuum state; that is an observation about its scope, not about its
correctness.  What in that article is new, and what it takes from others with
due citation, is set out in Section~\ref{sec:new}.
\end{itemize}

\section{The three constructions, equation by equation}\label{sec:three}

\paragraph{2000: Douglas, Pereira, and Yeh \cite{DPY2000}.}
Section~5 of \cite{DPY2000} treats the transport subproblem of a waterflood
system, written \emph{in divergence form} in space--time,
\begin{equation}\label{eq:dpy-div}
\nabla_{t,x}\cdot\bigl(\Phi S,\ \Lambda_{\mathrm w}U\bigr)^{\mathrm T}
= q^{+}-\Lambda_{\mathrm w}q^{-}
\qquad\text{(eq.~(5.2) of \cite{DPY2000})},
\end{equation}
with $\Phi S$ the conserved quantity and $\Lambda_{\mathrm w}U$ its flux.
The Introduction of \cite{DPY2000} identifies the divergence form, in contrast
to the nondivergence form underlying MMOC \cite{DouglasRussell1982} and
MMOCAA \cite{DFP1997,DHP1999} --- the latter developed in work co-authored by
two of the three authors of \cite{DPY2000} --- as the step that ``allows the
localization of the transport.''  For each boundary point of a
set $\mathcal K$ the construction solves the final value problem
\begin{equation}\label{eq:dpy-curve}
\frac{dy}{dt}=\frac{\Lambda_{\mathrm w}U}{\Phi S},
\qquad y(x;t_{n,\kappa+1})=x
\qquad\text{(eq.~(5.4a) of \cite{DPY2000})} :
\end{equation}
the slope is the ratio of the flux to the conserved quantity.  These curves
bound the space--time tube $\mathcal D$ of figure~1 of \cite{DPY2000}, which
calls the curves \emph{integral curves} and the region simply ``the tube''
(once, in the general discussion of \S5.1, ``the space--time tubes''); the
name \emph{integral tube} is the one used in
\cite{MPS2007} (``tubo integral'') and is the name \cite{AESLP2023} itself
attributes to \cite{DPY2000} (p.~2401); the zero-flux property is stated there explicitly, the
outward normal on the lateral surface being ``orthogonal to the vector
$(\Phi S,\Lambda_{\mathrm w}U)^{\mathrm T}$''; and integrating
\eqref{eq:dpy-div} over $\mathcal D$ gives the exact local balance
(eqs.~(5.6)--(5.7) of \cite{DPY2000}).  Two further points of that section
carry the weight of Section~\ref{sec:which} below.  The integral curves are
contrasted with the \emph{characteristic} curves
$dy/dt=\Lambda_{\mathrm w}'U/\Phi$ of MMOC, MMOCAA, ELLAM
\cite{CRHE1990}, and characteristics-mixed methods
\cite{ArbogastWheeler1995} (eqs.~(5.9a)--(5.9b), and (5.11a) for the
corresponding tube), the two families of curves coinciding identically only for linear
$\Lambda_{\mathrm w}$, the tube $\mathcal D$ being
``associated with the transport of mass'' whereas the characteristic tube
transports saturation; and equation (5.13) identifies the interface
distribution term that a locally conservative scheme built on characteristics
generates and whose accurate evaluation ``poses a severe problem.''  The
behavior at vanishing saturation is also addressed: since
$\Lambda_{\mathrm w}(S)/S=0$ for $0<S\le S_{\mathrm{w,res}}$ (eq.~(5.8)),
``the integral curves do not degenerate as $S\to 0$.''

\paragraph{2007: the scalar, forward-tracked specialization
\cite{MPS2007,MPS2007b}.}
Mancuso, Pereira, and de Souza specialized this construction to the scalar
conservation law \eqref{eq:ivp}, in forward-in-time form.  With $f\equiv H$,
the law is written as $\nabla_{t,x}\cdot(u,f(u))^{\mathrm T}=0$ (eq.~(2.2) of
\cite{MPS2007}) and the lateral boundaries solve the \emph{initial} value
problem
\begin{equation}\label{eq:mps-curve}
\frac{dy}{dt}=\frac{f(u)}{u},\qquad y(x,t^{n})=x
\qquad\text{(eq.~(2.12) of \cite{MPS2007})}.
\end{equation}
The bounded region is again called the integral tube (``tubo integral'') and
its lateral boundaries the integral curves (``curvas integrais''); their
normal is orthogonal to $(u,f(u))^{\mathrm T}$, and integration over the tube
gives exact local conservation (eqs.~(2.14)--(2.15)).  The resulting fully
discrete method, FLCELM (\emph{Forward Locally Conservative
Eulerian--Lagrangian Method}), tracks curves forward from cell vertices
(eq.~(2.16)), evolves cell averages (eqs.~(2.17)--(2.19)), carries a CFL-type
restriction involving both $f'$ and the ratio $f(U)/U$ (eq.~(2.21)), and is
tested on the Burgers and Buckley--Leverett \cite{BuckleyLeverett1942}
equations; the variant FLCELM-R adds a MinMod piecewise-linear
reconstruction of MUSCL type \cite{vanLeer1979}.  Both papers credit the source
of the geometry: the tubes are built ``usando a estrat\'egia de
constru\c{c}\~ao dos tubos no espa\c{c}o-tempo do m\'etodo LCELM'' [using the
strategy of construction of the space--time tubes of the LCELM method], with
\cite{DPY2000} cited, and \cite{MPS2007b} records that the construction
``uses the local conservation identity that appears in'' \cite{DPY2000}.
Forward tracking of integral tubes, under that name, appears in the same
period in the linear-transport scheme of Aquino, Pereira, Amaral Souto, and
Francisco \cite{QPAS2007}, which introduced Forward Integral-Tube Tracking
(FIT); in that linear setting integral curves and characteristics coincide, so
\cite{QPAS2007} is not an antecedent of the nonlinear scheme of
\cite{AESLP2023}, but it fixes the date at which forward tracking of integral
tubes, and the phrase itself, were in use.

\paragraph{2023: the article under comment \cite{AESLP2023}.}
There the scalar law is written as $\nabla_{x,t}\cdot(H(u),u)^{\mathrm T}=0$
(eq.~(2)), the control volumes $D^{n,n+1}_j$ are defined in eq.~(3), and
their lateral boundaries solve
\begin{equation}\label{eq:noflow}
\frac{d}{dt}\sigma^n_{j\pm1/2}(t)
=\frac{H\bigl(u(\sigma^n_{j\pm1/2}(t),t)\bigr)}
      {u\bigl(\sigma^n_{j\pm1/2}(t),t\bigr)},
\qquad t^n<t\le t^{n+1}
\qquad\text{(eq.~(6) of \cite{AESLP2023})},
\end{equation}
yielding the local balance (eq.~(4)).  Under the scalar dictionary
\begin{equation}\label{eq:dict}
\Phi S\longleftrightarrow u,\qquad
\Lambda_{\mathrm w}U\longleftrightarrow H(u),
\end{equation}
the source-free scalar form of \eqref{eq:dpy-div} is eq.~(2) of
\cite{AESLP2023} and eq.~(2.2) of \cite{MPS2007}, and \eqref{eq:dpy-curve}
becomes
\begin{equation}\label{eq:scalar-curve}
\frac{dy}{dt}=\frac{H(u)}{u},
\end{equation}
which is \eqref{eq:mps-curve} and \eqref{eq:noflow}.
Table~\ref{tab:correspondence} lists the correspondence element by element.

\paragraph{The three constructions drawn side by side.}
Figure~\ref{fig:tubes} reproduces, in a common notation, the schematic
space--time picture that accompanies each of the three constructions:
figure~1 of \cite{DPY2000}, figure~2 of \cite{MPS2007} (``Tubos integrais no
espa\c{c}o-tempo''), and figure~1 of \cite{AESLP2023}.  We include it because
the equation-level comparison of Table~\ref{tab:correspondence} is easy to
read as a numerical coincidence between formulas taken out of context,
whereas the three figures show that the formulas carry the same geometric
role in each paper: in every panel the shaded set is a space--time control
volume, its two lateral boundaries are curves whose slope is the ratio of the
flux to the conserved quantity, no mass crosses those boundaries, and the
mass on the bottom edge therefore equals the mass on the top edge.

What the panels show as genuinely different is not the geometry but the
\emph{direction in which the boundaries are tracked}: \cite{DPY2000} solves
the final value problem \eqref{eq:dpy-curve} and traces the curves backwards
from the later time level, whereas \cite{MPS2007,MPS2007b} and
\cite{AESLP2023} solve the initial value problems \eqref{eq:mps-curve} and
\eqref{eq:noflow} and trace them forwards --- the change of direction being
already the 2007 step, as \cite{QPAS2007} shows for linear advection.  The
figure is a picture of the continuous geometry only.  It says nothing about
how any of the three papers discretizes that geometry, which is where they
differ substantively (Sections~\ref{sec:prop} and \ref{sec:new}); in
particular the reader should not read the smooth curves of the panels as the
polygonal curves that the algorithms actually compute, for the reasons given
in Remark~\ref{rem:limits}.

\begin{figure}[htb]
\centering
\begin{tikzpicture}[scale=0.74,
   every node/.style={scale=0.80},
   pcap/.style={align=center,text width=4.7cm,font=\footnotesize}]
\begin{scope}
\fill[blue!8] (0,0) .. controls (0.55,1.0) .. (0.95,2.2)
 -- (3.45,2.2) .. controls (3.05,1.0) .. (2.5,0) -- cycle;
\draw[->] (-0.9,0) -- (4.3,0) node[below] {$x$};
\draw[->] (-0.7,-0.15) -- (-0.7,2.9) node[left] {$t$};
\draw[thick,bwd] (0.95,2.2) .. controls (0.55,1.0) .. (0,0);
\draw[thick,bwd] (3.45,2.2) .. controls (3.05,1.0) .. (2.5,0);
\draw[very thick] (0,0) -- (2.5,0);
\node[below=1pt] at (1.25,0) {$\overline{\mathcal K}$};
\draw[very thick] (0.95,2.2) -- (3.45,2.2);
\node[above=1pt] at (2.2,2.2) {$\mathcal K$};
\node at (1.9,1.05) {$\mathcal D$};
\node[left] at (-0.75,0) {$t_{n,\kappa}$};
\node[left] at (-0.75,2.2) {$t_{n,\kappa+1}$};
\draw[dotted] (-0.7,2.2) -- (0.95,2.2);
\node at (1.7,-1.35)
 {$\dfrac{dy}{dt}=\dfrac{\Lambda_{\mathrm w}U}{\Phi S}$ \ (5.4a)};
\node[pcap] at (1.7,-2.65)
 {\cite{DPY2000} (2000), fig.~1\\[1pt]
  \emph{integral curves}; the tube $\mathcal D$\\[1pt]
  final-value tracking};
\end{scope}
\begin{scope}[xshift=6.5cm]
\fill[blue!8] (0,0) .. controls (0.55,1.0) .. (0.95,2.2)
 -- (3.45,2.2) .. controls (3.05,1.0) .. (2.5,0) -- cycle;
\draw[->] (-0.9,0) -- (4.3,0) node[below] {$x$};
\draw[->] (-0.7,-0.15) -- (-0.7,2.9) node[left] {$t$};
\draw[thick,fwd] (0,0) .. controls (0.55,1.0) .. (0.95,2.2);
\draw[thick,fwd] (2.5,0) .. controls (3.05,1.0) .. (3.45,2.2);
\draw[very thick] (0,0) -- (2.5,0);
\node[below=1pt,font=\footnotesize] at (1.25,0)
 {$[x_{j-1/2},x_{j+1/2}]$};
\draw[very thick] (0.95,2.2) -- (3.45,2.2);
\node[align=center,font=\footnotesize] at (1.9,1.05) {tubo\\integral};
\node[left] at (-0.75,0) {$t^{n}$};
\node[left] at (-0.75,2.2) {$t^{n+1}$};
\draw[dotted] (-0.7,2.2) -- (0.95,2.2);
\node at (1.7,-1.35)
 {$\dfrac{dy}{dt}=\dfrac{f(u)}{u}$ \ (2.12)};
\node[pcap] at (1.7,-2.65)
 {\cite{MPS2007,MPS2007b} (2007), fig.~2\\[1pt]
  \emph{integral curves, integral tube}\\[1pt]
  forward tracking};
\end{scope}
\begin{scope}[xshift=13.0cm]
\fill[blue!8] (0,0) .. controls (0.55,1.0) .. (0.95,2.2)
 -- (3.45,2.2) .. controls (3.05,1.0) .. (2.5,0) -- cycle;
\draw[->] (-0.9,0) -- (4.3,0) node[below] {$x$};
\draw[->] (-0.7,-0.15) -- (-0.7,2.9) node[left] {$t$};
\draw[thick,fwd] (0,0) .. controls (0.55,1.0) .. (0.95,2.2)
 node[pos=0.70,left=1pt,font=\footnotesize] {$\sigma^n_{j-1/2}$};
\draw[thick,fwd] (2.5,0) .. controls (3.05,1.0) .. (3.45,2.2)
 node[pos=0.70,right=1pt,font=\footnotesize] {$\sigma^n_{j+1/2}$};
\draw[very thick] (0,0) -- (2.5,0);
\node[below=1pt,font=\footnotesize] at (1.25,0)
 {$[x^{n}_{j-1/2},x^{n}_{j+1/2}]$};
\draw[very thick] (0.95,2.2) -- (3.45,2.2);
\node[font=\footnotesize] at (1.9,1.05) {$D^{n,n+1}_j$};
\node[left] at (-0.75,0) {$t^{n}$};
\node[left] at (-0.75,2.2) {$t^{n+1}$};
\draw[dotted] (-0.7,2.2) -- (0.95,2.2);
\node at (1.7,-1.35)
 {$\dfrac{d\sigma}{dt}=\dfrac{H(u)}{u}$ \ (6)};
\node[pcap] at (1.7,-2.65)
 {\cite{AESLP2023} (2023), fig.~1\\[1pt]
  \emph{no-flow curves, no-flow region}\\[1pt]
  forward tracking};
\end{scope}
\end{tikzpicture}
\caption{The same continuous construction as it is drawn in the three
sources, under the dictionary \eqref{eq:dict}.  In each panel the lateral
boundaries of the shaded space--time region have slope equal to the ratio of
the flux to the conserved quantity, carry no mass flux
(Proposition~\ref{prop:flux} of Section~\ref{sec:prop}, under the flux
normalization $H(0)=0$ adopted there), and the mass on the lower edge equals
the mass on the upper edge (Remark~\ref{rem:tube}).  The arrows indicate the
direction of tracking, the only structural difference among the three panels:
backwards from $t_{n,\kappa+1}$ in \cite{DPY2000}, forwards from $t^n$ in
\cite{MPS2007,MPS2007b} and \cite{AESLP2023}.  Names differ across panels;
the equations do not.}
\label{fig:tubes}
\end{figure}

\begin{table}[htb]
\centering
\caption{Element-by-element comparison of the three continuous
constructions, under the dictionary \eqref{eq:dict}.  The divergence row is
the source-free scalar specialization of \cite{DPY2000}; note also that
\cite{DPY2000,MPS2007} order the space--time field as $(t,x)$ and
\cite{AESLP2023} as $(x,t)$.  The comparison uses the same flux function
throughout and is therefore unaffected by the flux normalization adopted in
Section~\ref{sec:prop}.}
\label{tab:correspondence}
\small
\renewcommand{\arraystretch}{1.25}
\begin{tabular}{%
>{\raggedright\arraybackslash}p{0.20\textwidth}
>{\raggedright\arraybackslash}p{0.235\textwidth}
>{\raggedright\arraybackslash}p{0.23\textwidth}
>{\raggedright\arraybackslash}p{0.23\textwidth}}
\toprule
\textbf{Element} & \textbf{\cite{DPY2000} (2000)} &
\textbf{\cite{MPS2007,MPS2007b} (2007)} &
\textbf{\cite{AESLP2023} (2023)}\\
\midrule
Space--time divergence form &
$\nabla_{t,x}\!\cdot(\Phi S,\Lambda_{\mathrm w}U)^{\mathrm T}\!=0$ (5.2) &
$\nabla_{t,x}\!\cdot(u,f(u))^{\mathrm T}\!=0$ (2.2) &
$\nabla_{x,t}\!\cdot(H(u),u)^{\mathrm T}\!=0$ (2)\\
Curve equation (slope $=$ flux\,/\,conserved quantity) &
$dy/dt=\Lambda_{\mathrm w}U/(\Phi S)$ (5.4a), final value &
$dy/dt=f(u)/u$ (2.12), initial value &
$d\sigma/dt=H(u)/u$ (6), initial value\\
Name of the curve &
integral curve &
curva integral / integral curve &
no-flow curve\\
Zero lateral flux &
normal $\perp(\Phi S,\Lambda_{\mathrm w}U)^{\mathrm T}$ on
$\partial\mathcal D$ &
normal $\perp(u,f(u))^{\mathrm T}$ &
``naturally impervious zero-flux boundaries''\\
Space--time control volume &
\emph{integral curves}; the tube $\mathcal D$ (fig.~1) &
tubo integral (fig.~2) &
no-flow region $D^{n,n+1}_j$ (3)\\
Exact local balance &
(5.6)--(5.7) &
(2.14)--(2.15) &
(4)\\
Explicit contrast with characteristics &
(5.9a)--(5.9b), (5.11a); interface distribution term (5.13) &
time-step bound involves both $f'$ and $f(U)/U$ (2.21) &
p.~2402, improvement over ``backward tracking \dots\ of the characteristic
curves''\\
Behavior at a vanishing conserved quantity &
$\Lambda_{\mathrm w}(S)/S=0$ on $0<S\le S_{\mathrm{w,res}}$ (5.8); no
degeneracy as $S\to0$ &
not separately discussed &
$f^\pm$ splitting after setting $f=H(u)/u$; no general prescription at
$u=0$; analysis assumes $u>a>0$\\
\bottomrule
\end{tabular}
\end{table}

\section{Which curve: what it means to introduce this
construction}\label{sec:which}

A comparison of equations invites the objection that moving control volumes
are old and common property.  They are, and the objection has to be met
precisely, because meeting it is what identifies the contribution actually at
stake.

\paragraph{Transporting a control volume is generic; choosing the curve is
not.}
That a mesh may be moved with the flow, and the result projected back onto a
fixed grid, is the defining freedom of arbitrary Lagrangian--Eulerian methods
\cite{HirtAmsdenCook1974} and is standard in computational gas dynamics
\cite{ColellaWoodward1984}.  Nothing below is a claim about that template.
But in the systems for which it was devised, the transporting velocity is
\emph{supplied by the model}: the Euler equations carry a velocity $v$ among
their unknowns, the continuity equation reads $\rho_t+(\rho v)_x=0$, and
moving with the material means moving with $v$.  The ratio of the mass flux
$\rho v$ to the density $\rho$ is the velocity already present in the
formulation, so no choice is made and none needs to be justified.

For the scalar conservation law \eqref{eq:ivp} there is no such variable.  A
curve along which to transport a cell must be \emph{defined} from the
solution itself, and the two natural definitions do not agree:
\begin{equation}\label{eq:two-families}
\frac{dx}{dt}=H'(u)
\quad\text{(characteristic)}
\qquad\text{versus}\qquad
\frac{dx}{dt}=\frac{H(u)}{u}
\quad\text{(flux-to-state ratio)},
\end{equation}
These are distinct functions of the state: by
$\frac{d}{du}\bigl(H(u)/u\bigr)=\bigl(uH'(u)-H(u)\bigr)/u^{2}$, they agree
throughout an interval of states precisely when $H(u)/u$ is constant there,
that is, for a linear flux $H(u)=cu$ --- although for a nonlinear flux they
may of course agree at isolated states.  Where $u\neq0$, the flux-to-state
ratio is precisely the condition of zero normal mass flux
(Proposition~\ref{prop:flux}).  The first is the classical choice
and the one taken by the modified method of characteristics, by MMOCAA, by
ELLAM, and by the characteristics-mixed methods
\cite{DouglasRussell1982,DFP1997,DHP1999,CRHE1990,ArbogastWheeler1995}.  The second is the choice
for which the lateral mass flux vanishes identically, so that exact local
conservation is an \emph{identity of the geometry} rather than a property to
be arranged in a discretization; Proposition~\ref{prop:flux} and
Remark~\ref{rem:tube} state that equivalence and its consequence.  Selecting
the second family, in a setting where the two differ and where the classical
literature had settled on the first, is the substantive act.

\paragraph{Where that act is performed.}
It is performed in Section~5 of \cite{DPY2000}, and performed with its
reasons.  The ratio curves are introduced (5.4a); the tube they bound is
exhibited (figure~1); the orthogonality of the outward normal to
$(\Phi S,\Lambda_{\mathrm w}U)^{\mathrm T}$ is stated; the exact local balance
is derived (5.6)--(5.7); the curves are set against the characteristic family
(5.9a)--(5.9b), (5.11a), with the tube $\mathcal D$ identified as the one ``associated with
the transport of mass'' while the characteristic tube transports saturation;
and the price of the classical alternative is identified, in the interface
distribution term (5.13) that a locally conservative scheme built on
characteristics generates and whose accurate evaluation ``poses a severe
problem.''  That deliberate contrast is what distinguishes an introduction of
the construction from an incidental use of a moving mesh.  Where the velocity
is given, the question which curve does not arise; where it is not given,
answering it is the whole of the construction.

\paragraph{Recognition of the construction in the same field.}
The point is not ours alone.  Working independently of the lineage discussed
here, Arbogast and Huang \cite{ArbogastHuang2010} and Arbogast, Huang, and
Hung \cite{ArbogastHuangHung2012} develop exactly conservative
Eulerian--Lagrangian and stream-tube methods in which a space--time control
volume is traced along the flow and carries no normal flux across its lateral
boundaries.  Their setting is advection--diffusion with a prescribed velocity
field rather than a nonlinear scalar law, so they are not antecedents of
\cite{AESLP2023} and we do not present them as such.  We cite them for one
documentary purpose: \cite{ArbogastHuangHung2012} cites \cite{DPY2000}.  An
unrelated group building locally conservative Eulerian--Lagrangian methods in
this arena treats the LCELM construction as part of the established prior art
of the subject.

\paragraph{The claim, stated exactly.}
Within the Eulerian--Lagrangian literature for transport in porous media and
for hyperbolic conservation laws, the space--time curve of slope flux over
conserved quantity, the tube it bounds, the vanishing of the lateral mass
flux, and the exact local balance that follows enter with \cite{DPY2000},
which introduces them, argues for them against the characteristic
alternative, and names them; and their nonlinear scalar forward-tracked form
enters with \cite{MPS2007,MPS2007b}, under the same names and with
\cite{DPY2000} credited.  This is the claim of the present Comment, and it is
the claim that Section~\ref{sec:three} verifies equation by equation.

\section{Why the ratio equation is the zero-flux condition}\label{sec:prop}

For a curve $x=\gamma(t)$, we write $u_\gamma(t):=u(\gamma(t),t)$ at
points of continuity and use the same notation for the relevant one-sided
trace at discontinuities.  The following proposition is elementary and is
included so that the phrase ``no-flow'' and the equation
$\dot\gamma(t)=H(u_\gamma(t))/u_\gamma(t)$ are connected by an explicit
statement rather than by convention.  We claim no novelty for it; its role is
to make precise the sense in which the second family in
\eqref{eq:two-families}, and only that family, deserves the name.

\paragraph{Normalization of the flux.}
A caveat must come first, because it constrains how the zero-flux property
may be read.  Equation \eqref{eq:ivp} is unchanged when $H$ is replaced by
$H+C$ for a constant $C$, but the vanishing of the flux across a single curve
is not: for $u\equiv1$ and $H(u)=u^2/2$ the condition gives slope $1/2$,
whereas $\widetilde H(u)=u^2/2+1$ describes the same equation and gives slope
$3/2$.  What \emph{is} invariant is the difference of the two lateral fluxes
across a fixed pair of curves.  Indeed, replacing $H$ by $H+C$ adds
$C\,(\tau_2-\tau_1)$ to the functional \eqref{eq:fluxfun} for every curve and
every subinterval, so the difference of the two lateral contributions --- and
hence the mass balance of Remark~\ref{rem:tube} --- is unaffected, while the
vanishing of each contribution separately is not.  The zero-flux property of
an individual curve is therefore determined not by the conservation law
alone, but by the conservation law together with a choice of flux
representative.  We fix throughout
\begin{equation}\label{eq:norm}
H(0)=0,
\end{equation}
which is the normalization carried by the models treated in the three sources
compared here: in \cite{DPY2000}, $\Lambda_{\mathrm w}(S)=0$ for
$0<S\le S_{\mathrm{w,res}}$ (eq.~(5.8)), hence $\Lambda_{\mathrm w}(0)=0$; in
the scalar papers, the Burgers, Buckley--Leverett, and
Lighthill--Whitham--Richards traffic
\cite{LighthillWhitham1955,Richards1956} fluxes all vanish at $u=0$.

Two consequences of \eqref{eq:norm} should be kept apart.  It is needed for
the statement that the ratio equation \emph{is} the zero-flux condition ---
that is, for Proposition~\ref{prop:flux} and for the reading of the word
``no-flow.''  It is \emph{not} needed for the comparison of
Section~\ref{sec:three}: Table~\ref{tab:correspondence} and
Figure~\ref{fig:tubes} compare three constructions written with one and the
same flux function, and a change of representative would move all three
alike.

\begin{proposition}[zero normal mass flux]\label{prop:flux}
Let $H$ be locally Lipschitz with $H(0)=0$, let $u$ be a locally bounded
weak solution of \eqref{eq:ivp}, and let $x=\gamma(t)$, $t\in[t_1,t_2]$, be a
Lipschitz curve along which $u$ admits a one-sided trace, denoted by
$u_\gamma\in L^\infty(t_1,t_2)$, from a fixed side.\footnote{\label{fn:traces}The trace
hypothesis is satisfied by piecewise smooth solutions and, more generally, by
weak solutions in $L^\infty_{\mathrm{loc}}\cap BV_{\mathrm{loc}}$, for which
one-sided traces on a Lipschitz graph exist by \cite[Thm.~3.87]{AFP2000}.  In
that case the choice of side is immaterial in \eqref{eq:fluxfun}.  Indeed,
$\operatorname{div}_{t,x}V=0$ as a measure for $V:=(u,H(u))^{\mathrm T}$, and
its absolutely continuous, jump, and Cantor parts are mutually singular, so
each vanishes separately; since the Cantor part does not charge
$\mathcal H^{1}$-$\sigma$-finite sets, the jump part carried by the graph
must vanish.  Hence, at $\mathcal H^{1}$-a.e.\ point of the graph, either the
two traces coincide or the graph is tangent to the jump set of $V$ and
$H(u^{\mathrm l}_\gamma)-\dot\gamma\,u^{\mathrm l}_\gamma
=H(u^{\mathrm r}_\gamma)-\dot\gamma\,u^{\mathrm r}_\gamma$.  Under the weaker
assumption that $V$ is only a divergence-measure field, the relevant
normal-trace theory is that of Chen and Frid \cite{ChenFrid1999}.}
Define the mass flux across $\gamma$ over
$[\tau_1,\tau_2]\subseteq[t_1,t_2]$ by
\begin{equation}\label{eq:fluxfun}
\mathcal F_\gamma(\tau_1,\tau_2)
=\int_{\tau_1}^{\tau_2}
 \bigl[H(u_\gamma(t))-\dot\gamma(t)\,u_\gamma(t)\bigr]\dd t .
\end{equation}
Then $\mathcal F_\gamma(\tau_1,\tau_2)=0$ for every subinterval
$[\tau_1,\tau_2]$ if and only if
\begin{equation}\label{eq:zeroflux}
H\bigl(u_\gamma(t)\bigr)=\dot\gamma(t)\,u_\gamma(t)
\qquad\text{for a.e.\ }t\in[t_1,t_2].
\end{equation}
On $\{t:u_\gamma(t)\ne0\}$, \eqref{eq:zeroflux} is the ratio equation
\begin{equation}\label{eq:ratio}
\dot\gamma(t)=\frac{H(u_\gamma(t))}{u_\gamma(t)} .
\end{equation}
On $\{t:u_\gamma(t)=0\}$, \eqref{eq:zeroflux} holds for every value of
$\dot\gamma$; the zero-flux requirement places no condition on the curve
there.
\end{proposition}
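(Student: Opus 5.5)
The plan is to reduce the statement to the Lebesgue differentiation theorem applied to the integrand of \eqref{eq:fluxfun}. First I check that this integrand is well defined and integrable. Since $\gamma$ is Lipschitz, $\dot\gamma$ exists a.e.\ and lies in $L^\infty(t_1,t_2)$. Since $u_\gamma\in L^\infty(t_1,t_2)$, its values stay in a compact interval $[-M,M]$. On that interval $H$ is Lipschitz, hence continuous and bounded, so $H(u_\gamma)$ is measurable and bounded. Set
\[
g(t):=H\bigl(u_\gamma(t)\bigr)-\dot\gamma(t)\,u_\gamma(t).
\]
Then $g\in L^\infty(t_1,t_2)\subset L^1(t_1,t_2)$, and $\mathcal F_\gamma(\tau_1,\tau_2)=\int_{\tau_1}^{\tau_2}g\,\dd t$ is finite for every subinterval.

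For the main equivalence, suppose first that \eqref{eq:zeroflux} holds. Then $g=0$ a.e., so every integral of $g$ vanishes. Conversely, suppose $\int_{\tau_1}^{\tau_2}g\,\dd t=0$ for all subintervals. Then the absolutely continuous function $G(\tau):=\int_{t_1}^{\tau}g\,\dd t$ is identically zero. By the Lebesgue differentiation theorem, $G'(\tau)=g(\tau)$ for a.e.\ $\tau$, so $g=0$ a.e., which is \eqref{eq:zeroflux}. An equivalent route is that the intervals generate the Borel $\sigma$-algebra, so the signed measure $g\,\dd t$ vanishes identically. This step uses neither the weak-solution property nor $H(0)=0$; only measurability and integrability matter.

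The last two assertions are pointwise algebra on the a.e.\ set where \eqref{eq:zeroflux} holds. Where $u_\gamma(t)\neq0$, dividing by $u_\gamma(t)$ gives \eqref{eq:ratio}. Where $u_\gamma(t)=0$, the normalization \eqref{eq:norm} turns both sides of \eqref{eq:zeroflux} into $0$, so the equation holds whatever $\dot\gamma(t)$ is. This is exactly where $H(0)=0$ enters: without it, $u_\gamma=0$ would force $H(0)=0$, a contradiction, and no curve could be flux-free there.

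I expect no real obstacle. The only delicate point is measure-theoretic bookkeeping: the trace $u_\gamma$ and $\dot\gamma$ are defined only a.e., and $H\circ u_\gamma$ must be measurable, which follows from the continuity of $H$. The choice of side for the trace is a hypothesis of the statement, and footnote~\ref{fn:traces} already explains why it is immaterial in the $BV$ case, so I will not reprove that.
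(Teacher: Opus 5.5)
Your proposal is correct and follows the paper's proof essentially step for step. Both arguments establish boundedness of the integrand, prove the trivial direction, use the absolutely continuous primitive $G$ whose a.e.\ derivative must vanish, divide by $u_\gamma$ where it is nonzero, and invoke $H(0)=0$ where it vanishes. The paper additionally interprets the integrand as the flux density of $V=(u,H(u))^{\mathrm T}$ through the graph oriented by $(-\dot\gamma,1)$, which motivates the statement but plays no role in the logic.
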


\begin{proof}
Orient the graph of $\gamma$ by the unnormalized normal $(-\dot\gamma,1)$ in
the coordinate order $(t,x)$; since the unit normal is
$(-\dot\gamma,1)/\sqrt{1+\dot\gamma^{2}}$ and the length element is
$\sqrt{1+\dot\gamma^{2}}\dd t$, the integrand $g_\gamma$ of
\eqref{eq:fluxfun} is the density of the flux of $V$ with respect to $\dd t$.
It lies in $L^\infty(t_1,t_2)$ because $u_\gamma$ is bounded, $H$ is locally
Lipschitz, and $\gamma$ is Lipschitz.  If \eqref{eq:zeroflux} holds, then
$g_\gamma=0$ a.e.\ and every integral vanishes.  Conversely, suppose every
integral vanishes and set $G(s):=\int_{t_1}^{s}g_\gamma\dd t$, which is
absolutely continuous.  Then $G(\tau_2)-G(\tau_1)=0$ for all
$t_1\le\tau_1\le\tau_2\le t_2$, so $G$ is constant and the fundamental
theorem of calculus for absolutely continuous functions gives
$g_\gamma=G'=0$ a.e., which is \eqref{eq:zeroflux}.  Dividing by $u_\gamma$
where it is nonzero gives \eqref{eq:ratio}.  Where $u_\gamma=0$,
\eqref{eq:zeroflux} reads $H(0)=0$, which holds by \eqref{eq:norm}, for every
value of $\dot\gamma$.
\end{proof}

\paragraph{Role of the hypotheses.}
They are worth separating.  The equivalence just proved is a property of the
single function $g_\gamma:=H(u_\gamma)-\dot\gamma\,u_\gamma$ and uses only its
integrability.  That $u$ solves \eqref{eq:ivp} is what makes
\eqref{eq:fluxfun} the mass flux of the space--time field
$V=(u,H(u))^{\mathrm T}$ through the graph rather than merely a functional of
$\gamma$, and it is also what makes the value independent of the side from
which the trace is taken (footnote~\ref{fn:traces}).  The normalization
\eqref{eq:norm} is used only in the last sentence of the statement.

\begin{remark}[tube balance]\label{rem:tube}
Let $\gamma_{\mathrm l}<\gamma_{\mathrm r}$ be two such curves, both
satisfying \eqref{eq:zeroflux} with traces taken from inside the region
$E:=\{(t,x):\tau_1<t<\tau_2,\ \gamma_{\mathrm l}(t)<x<\gamma_{\mathrm r}(t)\}$.
Since $\operatorname{div}_{t,x}V=0$, the four oriented boundary fluxes of $V$
on the bounded Lipschitz region $E$ sum to zero.  The outward normal is
$(-\dot\gamma_{\mathrm r},1)$ on the right boundary and
$(\dot\gamma_{\mathrm l},-1)$ on the left, so the two lateral contributions
are $\mathcal F_{\gamma_{\mathrm r}}(\tau_1,\tau_2)$ and
$-\mathcal F_{\gamma_{\mathrm l}}(\tau_1,\tau_2)$, whence
\[
\int_{\gamma_{\mathrm l}(\tau_2)}^{\gamma_{\mathrm r}(\tau_2)}u(x,\tau_2)\dd x
-\int_{\gamma_{\mathrm l}(\tau_1)}^{\gamma_{\mathrm r}(\tau_1)}u(x,\tau_1)\dd x
=\mathcal F_{\gamma_{\mathrm l}}(\tau_1,\tau_2)
-\mathcal F_{\gamma_{\mathrm r}}(\tau_1,\tau_2)=0
\]
for $t_1\le\tau_1\le\tau_2\le t_2$.  Here the horizontal normal traces are
identified with the time slices $u(\cdot,\tau_i)$, as is legitimate for the
entropy solutions in the sense of Kruzhkov \cite{Kruzkov1970} considered in
the three sources, which are continuous in time with values in
$L^1_{\mathrm{loc}}$.  The resulting identity is
eqs.~(5.6)--(5.7) of \cite{DPY2000} in the source-free case,
eqs.~(2.14)--(2.15) of \cite{MPS2007}, and eq.~(4) of \cite{AESLP2023}.
With a source $s\in L^1_{\mathrm{loc}}$ on the right of \eqref{eq:ivp}, the
same computation adds $\iint_E s$ to the right-hand side, recovering
eqs.~(5.6)--(5.7) of \cite{DPY2000} in full; since an $L^1$ source puts no
mass on a Lipschitz graph, the lateral characterization is unchanged.
Sources concentrated on curves or points are not covered.
\end{remark}

\begin{remark}[what Proposition~\ref{prop:flux} does not do]\label{rem:limits}
It starts from a curve that is already given, and characterizes when that
curve has zero normal flux.  It does not assert that such a curve exists
through a given point, that it is unique, that distinct such curves do not
cross, that they depend stably on $u$, or that the curves computed by any of
the discrete algorithms converge to them.  At a discontinuity of $u$, two
distinct issues arise and should not be conflated.  For a curve that is
\emph{given}, the functional \eqref{eq:fluxfun} is unambiguous, by the
compatibility recorded in footnote~\ref{fn:traces}.  But as a prescription
for \emph{producing} a curve, \eqref{eq:ratio} has no canonical pointwise
right-hand side where $u$ jumps: the two one-sided ratios
$H(u^{-})/u^{-}$ and $H(u^{+})/u^{+}$ generally differ from each other, and
generally from the Rankine--Hugoniot speed, so \eqref{eq:ratio} is not a
well-posed ordinary differential equation there.  Each algorithm resolves the
ambiguity by its own reconstruction (cell average, interface value, limited
reconstruction), and we prove nothing about the curves so produced.
Accordingly, we describe the relation between the three constructions as a
correspondence of defining equations, not as an equivalence of methods or of
computed curves.  Finally, a no-flow curve is not in general a shock
trajectory: along a jump satisfying Rankine--Hugoniot the two one-sided normal
fluxes agree, whereas \eqref{eq:zeroflux} additionally requires that this
common flux vanish.  When both traces are nonzero, this is equivalent to both
one-sided ratios being equal to $\dot\gamma$.
\end{remark}

\begin{remark}[vacuum]\label{rem:vacuum}
By the last statement of Proposition~\ref{prop:flux}, if $u\equiv0$ on an open
space--time set then every Lipschitz curve inside it has zero flux, whatever
its slope.  Selecting a continuation through a zero state is therefore a
modeling or numerical decision, not a consequence of the zero-flux
requirement.  Under \eqref{eq:norm} and local Lipschitz continuity of $H$ one
has $|H(u)/u|=|H(u)-H(0)|/|u|\le\operatorname{Lip}(H)$ near $u=0$, so the
issue is the assignment of a value to an indeterminate quotient rather than
an unbounded slope; it is nonetheless a real one for a scheme that must
evaluate the ratio at every tracked point.  The article under comment defines
the positive and negative parts $f^+=\max(f,0)$ and $f^-=\max(-f,0)$ only
after setting $f=H(u)/u$ \cite[p.~2407, eq.~(11)]{AESLP2023}.  This upwind splitting does not itself assign
$f(0)$; the convergence analysis expressly assumes $u\ne0$, indeed
$u>a>0$ \cite[p.~2409, immediately after eq.~(19)]{AESLP2023}; the no-flow
ODE is itself introduced there ``assuming $u\neq0$ (for the sake of
presentation)'' \cite[p.~2405]{AESLP2023}.  For Burgers' flux $H(u)=u^2/2$, the quotient instead admits the
removable extension $f(u)=u/2$ through zero.  A problem-dependent flux
decomposition such as the strategy called ``flux separation'' in
\cite[p.~2314, Section~3, eq.~(19)]{AbreuPerez2019} is a distinct operation.
\end{remark}

\section{Chronology of the terminology}\label{sec:chronology}

The object described in Section~\ref{sec:three} appears in the pertinent
literature in the following sequence.  In 2000, \cite{DPY2000} established the
space--time integral curves and the tube they bound, the zero lateral flux, the
exact local balance, the contrast with characteristic curves, and the
non-degeneracy at vanishing states.  In 2007, \cite{MPS2007,MPS2007b}
specialized the construction to nonlinear scalar conservation laws in
forward-tracked form, naming the curves \emph{curvas integrais} and the region
\emph{tubo integral} and citing \cite{DPY2000}; \cite{QPAS2007} made forward
integral-tube tracking explicit in the linear advective setting.  From 2017
through the HYP2018 proceedings published in 2020, the first papers of the
later Lagrangian--Eulerian series
\cite{ALPS2017,APS-CNMAC2017,APS-CNMAC2018,APS2018,ALPS-HYP2020} built
finite-volume schemes on space--time control volumes bounded by parameterized
integral curves, imposed zero lateral flux, and cited \cite{DPY2000}.  The
2019 paper \cite{AbreuPerez2019} described a ``space--time tracking forward''
scheme, labeled the geometry ``Integral Tube'' in its schematic figure, and
credited the concept to \cite{DPY2000}.
The caption of its Figure~5 already calls the tracked boundaries ``the no flow
curves of the integral tubes.''  The later terminological change is therefore
not the first appearance of the phrase, but its promotion from a description
of the boundaries within the integral-tube construction to the dominant name
of the construction.

Table~\ref{tab:attrib} sets out, for every paper in the defined corpus, the
sentence or passage in which the construction is attributed.  What changes
across the sequence is not the defining continuous geometry ---
Table~\ref{tab:correspondence} shows the same curve equation, tube, and
zero-flux property --- but the account given of where it came from.

\begingroup
\footnotesize
\renewcommand{\arraystretch}{1.15}
\setlength{\LTpre}{0.7\baselineskip}
\setlength{\LTpost}{0.7\baselineskip}
\begin{longtable}{%
>{\raggedright\arraybackslash}p{0.20\textwidth}
>{\raggedright\arraybackslash}p{0.14\textwidth}
>{\raggedright\arraybackslash}p{0.55\textwidth}}
\caption{How the construction is attributed in the defined full-text corpus.
Text in quotation marks reproduces the source wording; bracketed numbers
inside quotations are the original reference numbers of the paper quoted,
retained exactly and identified in the corresponding entry; bracketed
ellipses mark omissions.  Panel~A lists
papers in which \cite{DPY2000} is identified as a source of the construction,
alone or among named antecedents without a conflicting novelty claim.
Panel~B lists mixed, incomplete, self-directed, or absent attribution,
including papers that cite DPY while presenting the same construction, or a
defining component of it, as new or different.}
\label{tab:attrib}\\
\toprule
\textbf{Paper} & \textbf{Name used} & \textbf{Attribution, as stated there}\\
\midrule
\endfirsthead
\multicolumn{3}{@{}l}{\footnotesize\tablename~\thetable\ (continued)}\\[2pt]
\toprule
\textbf{Paper} & \textbf{Name used} & \textbf{Attribution, as stated there}\\
\midrule
\endhead
\midrule
\multicolumn{3}{r@{}}{\footnotesize Continued on the next page}\\
\endfoot
\bottomrule
\endlastfoot
\multicolumn{3}{@{}>{\raggedright\arraybackslash}p{0.90\textwidth}@{}}{%
\textbf{A. \cite{DPY2000} is identified as a source of the construction}}\\
\midrule
\cite{ALPS2017} (2017) & integral curve / integral tube &
Introduction, p.~3: ``In the work [14], the authors identified the region in
the space--time domain where the mass conservation takes place, but linked to
a scalar convection-dominated nonlinear parabolic problem''; reference [14]
there is \cite{DPY2000}.  The paper then calls $\sigma_j^n$ an integral curve
and sets $\dot{\sigma}_j^n=H(u)/u$\\
\cite{APS-CNMAC2017} (2017) & parameterized integral curve &
Introduction, p.~010329-1: ``In the work [4], it was identified the region in
the space-time domain where the mass conservation takes place''; reference
[4] there is \cite{DPY2000}.  The lateral boundaries are then called integral
curves, required to have zero flux, and set to satisfy
$\dot\sigma=H(u)/u$\\
\cite{APS-CNMAC2018} (2018) & parameterized integral curve / space--time
volume &
Abstract, p.~010296-1: ``As in [3,5] the mass conservation takes place in the
space-time volume $D_j^n$''; references [3] and [5] there are
\cite{APS-CNMAC2017} and \cite{DPY2000}.  The paper again defines its lateral
boundaries as parameterized integral curves with zero flux\\
\cite{APS2018} (2018) & integral tube &
Section~2, p.~3: ``Following [1, 2, 14, 28, 20]''; reference [14] there is
\cite{DPY2000}.  After imposing impermeability, ``the region $D_j^n$ will be
called `Integral tube' ''\\
\cite{AbreuPerez2019} (2019) & integral tube; ``no flow curves'' in Fig.~5 &
Introduction, p.~2310: ``the authors also used the concept of integral tubes
(or space--time tubes) introduced in [17]''; reference [17] there is
\cite{DPY2000}\\
\cite{ALPS-HYP2020} (HYP2018; publ.~2020) & parameterized integral curve /
integral tube &
Introduction, p.~223: ``In the work [11], it was identified the region in the
space-time domain where the mass conservation takes place''; reference [11]
there is \cite{DPY2000}.  The paper imposes zero flux, sets
$\dot\sigma=H(u)/u$, and captions Figure~1 ``The Integral tube''\\
\cite{APL2025bal} (2025) & No-Flow curves &
Introduction, p.~2: ``the idea of integral curves as introduced in [24]
for the treatment of nonlinear transport in porous media''; reference [24]
there is \cite{DPY2000}\\
\midrule
\multicolumn{3}{@{}>{\raggedright\arraybackslash}p{0.90\textwidth}@{}}{%
\textbf{B. Presented as new or different, or not attributed to
\cite{DPY2000}}}\\
\midrule
\cite{AMPR2021} (2021) & no-flow region &
Section~2.1, p.~6: ``in the terminology discussed in [24,25], the region
$D^n_j$ is called integral tube, which is substantially different from our
recent developments [1,4--8,48] supported with very important differences in
concept of the no-flow region per time step and theory foundations'';
references [24] and [25] there are \cite{DouglasHuang2001} and
\cite{DPY2000}, respectively\\
\cite{AFLP-JCAM2022} (2022) & no-flow curves &
Introduction, p.~3: ``We present a new 1D SDLE scheme based on the novel
concept of no-flow curves recently introduced in the literature [5,6,28]'';
those three references are to papers by overlapping author teams.  On the
same page, the lateral boundaries are ``two special curves [27] (here under
the name no-flow curves [6])''; references [27] and [6] there are
\cite{DPY2000} and \cite{AbreuPerez2019}, respectively\\
\cite{AFLP-JSC2022} (2022) & no flow curves &
Abstract, p.~1: ``the space--time no flow surface region, previously presented
and analyzed by the authors for fully-discrete schemes''; Introduction, p.~2:
``the space--time no flow curves, per time step, previously presented and
studied by the authors for fully-discrete schemes [1--8]''; references [1--8]
there are all self-citations\\
\cite{AESLP2023} (2023) & no-flow curves &
abstract: ``the improved concept of no-flow curves, as introduced by the
authors''; pp.~2401 and 2405: ``based on the new and substantial improvement
interpretation of the integral tube''; p.~2402: ``novel fully discrete schemes
based on the new concept no-flow curves [\dots] which is substantially
different in theory foundations from the previous and relevant LCELM method.''
Page~2401 nevertheless credits the LCELM and cites eqs.~(5.4a)--(5.4b) of
reference [27] there, namely \cite{DPY2000}\\
\cite{AdlCLP2022} (2024) & no-flow curves &
Section~2.1, p.~1440: ``the new and substantial improvement interpretation
of the integral tube, which is now subject to condition [\dots] (called
no-flow curves [3])''; reference [3] there is \cite{AbreuPerez2019}\\
\cite{ACDJL2024lwr} (2024) & no-flow curves &
Section~2,  p.~5: ``the concept of the no-flow property introduced in [2]'';
reference [2] there is \cite{AbreuPerez2019}.  On PDF p.~4,
\cite{DPY2000}, reference [15] there, appears only in a list ``for nonlinear
scalar transport problems''\\
\cite{AAP2024tri} (2024) & no-flow surfaces &
Introduction, p.~2: ``Lagrangian methods can be based on the concept of
space--time control volume. This concept was first introduced in [34--36] to
solve parabolic convection--diffusion transport models (scalar case)'';
references [34], [35], and [36] there are \cite{DPY2000}, \cite{DPY2000b}, and
\cite{DouglasHuang2001}.  The same page says that the no-flow-surface concept
``has been extensively explored and analyzed by the authors''; Section~2,
p.~4, begins ``Based on the novel concept of Lagrangian--Eulerian no-flow
curves (per time step)''\\
\cite{AALP2025tri} (2025) & no-flow curves &
Appendix~A.1, p.~808: ``As in [6--9, 18, 28, 29], we assume that
$[u,H(u)]\cdot\vec n=0$''; reference [18] there is \cite{DPY2000}, bundled
with six self-citations, and DPY is nowhere named as the source of the curves\\
\cite{AdlCJL2025} (2025) & no-flow curves &
no reference to \cite{DPY2000} appears anywhere in the paper.  The abstract,
p.~1, says that the authors ``expanded upon the (local) semi-discrete
Lagrangian--Eulerian method initially introduced in Abreu et al. (2022)'';
on p.~4, for ``a detailed description of no-flow curves,'' the reader is
referred to reference [21] there, namely \cite{AdlCLP2022}\\
\cite{ALL2026dd} (2026) & no-flow curves / no-flow surfaces &
Remark~1, p.~8, says that the no-flow curve ``naturally extends the integral
curve originally introduced by Douglas, Pereira, and Yeh [38]'' and also, ``In
the present paper, we adopt this established construction and apply it to
equations of the form (1).''  Definition~2.1 and eqs.~(9), (10), and (14)
reproduce the same space--time zero-flux field and $\dot\sigma=H/u$ curve;
p.~10 states that diffusion and dispersion do not modify the defining vector
field, while p.~8 calls its space--time vector representation ``novel''\\
\cite{AFFG2026} (2026) & multi\-dimensional no-flow curves &
abstract, p.~1: ``a recently introduced semi-discrete Lagrangian--Eulerian
formulation for the hyperbolic transport equations''; p.~2: the ``recently
developed genuinely multidimensional'' scheme ``is based on `no-flow' curves
[9],'' where reference [9] there is \cite{AbreuPerez2019}.  Equation~(5.46),
p.~11, uses the componentwise ratios $H_L(S)/S_L$ and $G_L(S)/S_L$; no
reference to \cite{DPY2000} appears anywhere in the paper\\
\end{longtable}
\endgroup

Three remarks on the table.  First, it is not merely a list of omissions:
\cite{DPY2000} is cited in every paper listed except
\cite{AdlCJL2025,AFFG2026}.  Panel~B records several distinct defects ---
absent attribution, attribution confined to a list or a narrower setting, or a
direct acknowledgment accompanied by a claim that the identical construction
is new, different, or an extension.
Second, Panel~A shows that \cite{DPY2000} was known and expressly
identified as a source of the construction in papers by overlapping author
teams.  Third, the transitional terminology in \cite{AbreuPerez2019} does not
reconcile the two panels: that
paper refers to ``the no flow curves of the integral tubes'' while crediting
the concept of integral tubes to \cite{DPY2000}.  The same construction is
described as substantially different from the one in \cite{DPY2000} in
\cite{AMPR2021}, and as introduced by the authors in \cite{AESLP2023},
although the defining continuous curve equation and zero-flux property did
not change in between.

\section{What is new in the article under comment}\label{sec:new}

To keep that conclusion within its proper scope, we now state explicitly what
remains new in the article under comment.  Within the documented lineage, the
contributions of \cite{AESLP2023} that we
do not find in \cite{DPY2000}, \cite{MPS2007,MPS2007b}, or \cite{QPAS2007}
are: the particular fully discrete \emph{non-staggered} scheme, with dynamic
forward evolution followed by projection onto the fixed Eulerian grid --- the
evolution-and-projection template itself being classical, as
Section~\ref{sec:which} records, so that what is new here is the
non-staggered realization of it; a weak
CFL condition formulated through $|H(u)/u|$ alone, where the 2007 restriction
(eq.~(2.21) of \cite{MPS2007}) involved both $f'$ and the ratio $f(U)/U$; and
a convergence theory built on the weak asymptotic method of Danilov,
Omel'yanov, and Shelkovich \cite{DOS2003,DanilovShelkovich2005}, yielding
bounded-variation, maximum-principle, and Kruzhkov-entropy conclusions.  We do
not challenge the novelty or value of those contributions here.  The companion
papers extend the framework to forcing terms, multidimensional systems,
nonlocal laws, triangular grids, discontinuous-flux models, and multiscale
porous-media simulations.

Two qualifications belong alongside this list, and they are qualifications of
scope rather than of merit.  First, forward tracking, fully discrete
evolution of cell averages, and limited piecewise-linear reconstruction
already occur in the staggered FLCELM and FLCELM-R schemes of
\cite{MPS2007,MPS2007b}, and forward integral-tube tracking for linear
advection in \cite{QPAS2007}; what is new is the non-staggered realization
and its analysis, not forward tracking as such.  Second, the weak asymptotic
method is not itself a contribution of \cite{AESLP2023}: it was developed by
Danilov, Omel'yanov, and Shelkovich \cite{DOS2003,DanilovShelkovich2005}, and
\cite{AESLP2023} cites that work.  What belongs to \cite{AESLP2023} is the
application of the method to the scheme it constructs, and that application
is, of everything in the article, the part least anticipated by the earlier
literature and the part on which its claim to novelty rests most securely.  It
concerns the discretization, and nothing in this Comment touches it.

\section{Conclusion}\label{sec:conclusion}

\paragraph{Two statements from the same article.}
The article under comment states, on page~2401, that \cite{DPY2000} was ``the
first work in the literature to introduce a space--time local conservation,''
with the region called the integral tube and bounded by integral curves, and
refers there to equations (5.4a)--(5.4b) of \cite{DPY2000} --- the
integral-curve problem \eqref{eq:dpy-curve} itself.  Its
abstract states that its scheme rests on ``the improved concept of no-flow
curves, as introduced by the authors,'' and page~2402 describes that concept
as ``substantially different in theory foundations'' from the LCELM.
Sections~\ref{sec:three} and \ref{sec:which} establish that the two
statements are about the same continuous object: under the dictionary
\eqref{eq:dict}, the curve of eq.~(6) of \cite{AESLP2023} is the curve of
eq.~(5.4a) of \cite{DPY2000}, the no-flow region is the tube $\mathcal D$, the
zero-flux property is the orthogonality of the normal to
$(\Phi S,\Lambda_{\mathrm w}U)^{\mathrm T}$, and the local balance (4) is
(5.6)--(5.7).  Within the documented Eulerian--Lagrangian lineage, the first
statement is consistent with the record.  The second, understood as a claim
about the continuous curve rather than about the discrete scheme built upon
it, is not supported by that record and is in tension with the first.  This
identification is also confirmed by the equations in \cite{ALL2026dd}, which
derive the same ratio ODE and state that diffusion and dispersion leave the
curve-defining vector field unchanged.  More broadly, the chronology and
Table~\ref{tab:attrib} document a series of papers in which the same
construction is variously credited to DPY, associated principally with later
work, or presented as new, different, or an extension.

\paragraph{What the correspondence does and does not settle.}
It is a correspondence of defining equations of continuous objects.  It
asserts nothing about existence, uniqueness, or selection of curves, nothing
about the discrete curves the algorithms compute (Remark~\ref{rem:limits}),
and nothing about moving-mesh methods in general
(Section~\ref{sec:which}).  In particular it leaves intact everything set out
in Section~\ref{sec:new}: the non-staggered fully discrete design, the
weak CFL condition, and the convergence analysis carried out for that scheme
by the weak asymptotic method of \cite{DOS2003,DanilovShelkovich2005}, all of
which remain contributions of
\cite{AESLP2023} whatever the continuous geometry is called.  Our conclusion
concerns a name and the object it names, not the value of the work done with
them.

\paragraph{What a reader gains.}
Orientation, in the first place: a reader meeting the no-flow curves can
locate the same continuous object in Section~5 of \cite{DPY2000} and in the
2007 scalar papers, and read those works as part of one development rather
than as unrelated antecedents cited for a single property.  Transfer, in the
second: because the continuous geometry is shared, the zero-flux
characterization of Proposition~\ref{prop:flux}, the tube balance of
Remark~\ref{rem:tube}, and the vacuum discussion of Remark~\ref{rem:vacuum}
may be read once and applied to either formulation.  And a sharper view of the
novelty itself, in the third: separating the continuous geometry from the
discretization makes clearer, not less clear, where the contributions of
\cite{AESLP2023} lie.  We ask nothing further than that the continuous
construction be located where the published record places it.

\subsection*{Statements and declarations}

{\small
\noindent\emph{Funding.} F.~Pereira is partially supported by National Science
Foundation (USA) grant 2401945.  Any opinions, findings, and conclusions or
recommendations expressed in this material are those of the authors and do not
necessarily reflect the views of the National Science Foundation.

\smallskip
\noindent\emph{Competing interests.} F.~Pereira is a co-author of
\cite{DPY2000,MPS2007,MPS2007b,QPAS2007} and L.-M.~Yeh is a co-author of
\cite{DPY2000}; the first author of \cite{AESLP2023} was formerly
F.~Pereira's doctoral student.  The authors declare no other competing
interests.  This Comment documents the content of published sources; the
comparisons on which it rests are stated as equation numbers and quotations
so that any reader may check them independently of the authors.

\smallskip
\noindent\emph{Author contributions (CRediT).} Conceptualization,
investigation, writing --- original draft, and writing --- review and editing:
F.~Furtado, F.~Pereira, L.-M.~Yeh (equal contributions).

\smallskip
\noindent\emph{Data availability.} No datasets were generated or analyzed; all
material quoted or cited is available in the published sources listed in the
references.

\smallskip
\noindent\emph{Declaration of generative AI and AI-assisted technologies in
the writing process.} During the preparation of this work, the authors used
OpenAI's ChatGPT and Codex to assist with language editing, stylistic
polishing, document organization, consistency and cross-reference checks,
LaTeX preparation, and drafting of the schematic figure.  After using these
tools, the authors reviewed and edited the resulting material as needed and
take full responsibility for the content of the manuscript.
\par}

\begingroup
\footnotesize
\raggedright

\endgroup


\begin{thebibliography}{99}
\setlength{\itemsep}{0.12em}

\bibitem{AALP2025tri}
E.~Abreu, J.~Agudelo, W.~Lambert, and J.~P\'erez,
\emph{A Lagrangian--Eulerian method on regular triangular grids for
hyperbolic problems: error estimates for the scalar case and a positive
principle for multidimensional systems},
J.\ Dyn.\ Differ.\ Equ.\ \textbf{37} (2025), no.~1, 749--814.
\mbox{doi:10.1007/s10884-023-10283-1}.

\bibitem{AFFG2026}
E.~Abreu, P.~Ferraz, J.~R. Fran\c{c}ois, and J.~Galvis,
\emph{Integrating semi-discrete Lagrangian--Eulerian schemes with generalized
multiscale finite elements for enhanced two- and three-phase flow simulations},
J.\ Comput.\ Appl.\ Math.\ \textbf{484} (2026), art.~117401.
\mbox{doi:10.1016/j.cam.2026.117401}.

\bibitem{ALL2026dd}
E.~Abreu, W.~Lambert, and E.~Lima,
\emph{A semi-discrete Lagrangian--Eulerian numerical scheme for
diffusive--dispersive conservation laws with discontinuous coefficient},
Numer.\ Methods Partial Differential Equations \textbf{42} (2026),
art.~e70131. \mbox{doi:10.1002/num.70131}.

\bibitem{AAP2024tri}
E.~Abreu, J.~Agudelo, and J.~P\'erez,
\emph{A triangle-based positive semi-discrete Lagrangian--Eulerian scheme via
the weak asymptotic method for scalar equations},
J.\ Comput.\ Appl.\ Math.\ \textbf{437} (2024), art.~115465.
\mbox{doi:10.1016/j.cam.2023.115465}.

\bibitem{ACDJL2024lwr}
E.~Abreu, M.~T. Chiri, R.~De la Cruz, J.~Juajibioy, and W.~Lambert,
\emph{A semidiscrete Lagrangian--Eulerian scheme for the LWR traffic model
with discontinuous flux}, arXiv:2412.06692 (2024).

\bibitem{AdlCLP2022}
E.~Abreu, R.~De la Cruz, J.~C. Juajibioy, and W.~Lambert,
\emph{Lagrangian--Eulerian approach for nonlocal conservation laws},
J.\ Dyn.\ Differ.\ Equ.\ \textbf{36} (2024), 1435--1481.
\mbox{doi:10.1007/s10884-022-10193-8}.

\bibitem{AdlCJL2025}
E.~Abreu, R.~De la Cruz, J.~Juajibioy, and W.~Lambert,
\emph{Semi-discrete Lagrangian--Eulerian approach based on the weak asymptotic
method for nonlocal conservation laws in several dimensions},
J.\ Comput.\ Appl.\ Math.\ \textbf{458} (2025), art.~116325.
\mbox{doi:10.1016/j.cam.2024.116325}.

\bibitem{AESLP2023}
E.~Abreu, A.~Esp\'{\i}rito Santo, W.~Lambert, and J.~P\'erez,
\emph{Convergence, bounded variation properties and Kruzhkov solution of a
fully discrete Lagrangian--Eulerian scheme via weak asymptotic analysis for
1D hyperbolic problems},
Numer.\ Methods Partial Differential Equations \textbf{39} (2023), no.~3,
2400--2443. \mbox{doi:10.1002/num.22972}.  Preprint version:
arXiv:\hspace{0pt}2106.08363v3 [math.NA], 2 February 2022, under the title
\emph{Convergence of a Lagrangian--Eulerian scheme by a weak asymptotic
analysis for one-dimensional hyperbolic problems}.

\bibitem{AFLP-JCAM2022}
E.~Abreu, J.~Fran\c{c}ois, W.~Lambert, and J.~P\'erez,
\emph{A semi-discrete Lagrangian--Eulerian scheme for hyperbolic-transport
models},
J.\ Comput.\ Appl.\ Math.\ \textbf{406} (2022), art.~114011.
\mbox{doi:10.1016/j.cam.2021.114011}.

\bibitem{AFLP-JSC2022}
E.~Abreu, J.~Fran\c{c}ois, W.~Lambert, and J.~P\'erez,
\emph{A class of positive semi-discrete Lagrangian--Eulerian schemes for
multidimensional systems of hyperbolic conservation laws},
J.\ Sci.\ Comput.\ \textbf{90} (2022), no.~1, art.~40.
\mbox{doi:10.1007/s10915-021-01712-8}.

\bibitem{ALPS2017}
E.~Abreu, W.~Lambert, J.~P\'erez, and A.~Santo,
\emph{A new finite volume approach for transport models and related
applications with balancing source terms},
Math.\ Comput.\ Simulat.\ \textbf{137} (2017), 2--28.
\mbox{doi:10.1016/j.matcom.2016.12.012}.

\bibitem{ALPS-HYP2020}
E.~Abreu, W.~Lambert, J.~P\'erez, and A.~Santo,
\emph{A weak asymptotic solution analysis for a Lagrangian--Eulerian scheme
for scalar hyperbolic conservation laws},
in: A.~Bressan, M.~Lewicka, D.~Wang, and Y.~Zheng (eds.),
Hyperbolic Problems: Theory, Numerics, Applications, AIMS Ser.\ Appl.\ Math.,
vol.~10, American Institute of Mathematical Sciences, Springfield, MO, 2020,
223--230.

\bibitem{AMPR2021}
E.~Abreu, V.~Matos, J.~P\'erez, and P.~Rodr\'{\i}guez-Berm\'udez,
\emph{A class of Lagrangian--Eulerian shock-capturing schemes for first-order
hyperbolic problems with forcing terms},
J.\ Sci.\ Comput.\ \textbf{86} (2021), art.~14.
\mbox{doi:10.1007/s10915-020-01392-w}.

\bibitem{APL2025bal}
E.~Abreu, E.~Pandini, and W.~Lambert,
\emph{An enhanced Lagrangian--Eulerian method for a class of balance laws:
numerical analysis via a weak asymptotic method with applications},
Numer.\ Methods Partial Differential Equations \textbf{41} (2025), no.~1,
art.~e23163. \mbox{doi:10.1002/num.23163}.

\bibitem{AbreuPerez2019}
E.~Abreu and J.~P\'erez,
\emph{A fast, robust, and simple Lagrangian--Eulerian solver for balance laws
and applications},
Comput.\ Math.\ Appl.\ \textbf{77} (2019), 2310--2336.
\mbox{doi:10.1016/j.camwa.2018.12.019}.

\bibitem{APS2018}
E.~Abreu, J.~P\'erez, and A.~Santo,
\emph{Lagrangian--Eulerian approximation methods for balance laws and
hyperbolic conservation laws},
Rev.\ UIS Ing.\ \textbf{17} (2018), no.~1, 191--200.
\mbox{doi:10.18273/revuin.v17n1-2018018}.

\bibitem{APS-CNMAC2017}
E.~Abreu, J.~P\'erez, and A.~Santo,
\emph{Solving hyperbolic conservation laws by using Lagrangian--Eulerian
approach},
Proceeding Series of the Brazilian Society of Computational and Applied
Mathematics \textbf{5} (2017), no.~1, 010329-1--010329-7.
\mbox{doi:10.5540/03.2017.005.01.0329}.

\bibitem{APS-CNMAC2018}
E.~Abreu, J.~P\'erez, and A.~Santo,
\emph{A conservative Lagrangian--Eulerian finite volume approximation method
for balance law problems},
Proceeding Series of the Brazilian Society of Computational and Applied
Mathematics \textbf{6} (2018), no.~1, 010296-1--010296-7.
\mbox{doi:10.5540/03.2018.006.01.0296}.

\bibitem{ALLEA2023}
ALLEA -- All European Academies,
\emph{The European Code of Conduct for Research Integrity}, revised ed.,
Berlin, 2023. \mbox{doi:10.26356/ECOC}.

\bibitem{AFP2000}
L.~Ambrosio, N.~Fusco, and D.~Pallara,
\emph{Functions of Bounded Variation and Free Discontinuity Problems},
Oxford Math.\ Monogr., Oxford University Press, Oxford, 2000.

\bibitem{QPAS2007}
J.~Aquino, F.~Pereira, H.~P. Amaral Souto, and A.~S. Francisco,
\emph{A forward tracking scheme for solving radionuclide advective problems in
unsaturated porous media},
Int.\ J.\ Nuclear Energy Sci.\ Technol.\ \textbf{3} (2007), no.~2, 196--205.
\mbox{doi:10.1504/IJNEST.2007.014656}.

\bibitem{ArbogastHuang2010}
T.~Arbogast and C.-S. Huang,
\emph{A fully conservative Eulerian--Lagrangian method for a
convection--diffusion problem in a solenoidal field},
J.\ Comput.\ Phys.\ \textbf{229} (2010), 3415--3427.
\mbox{doi:10.1016/j.jcp.2010.01.009}.

\bibitem{ArbogastHuangHung2012}
T.~Arbogast, C.-S. Huang, and C.-H. Hung,
\emph{A fully conservative Eulerian--Lagrangian stream-tube method for
advection--diffusion problems},
SIAM J.\ Sci.\ Comput.\ \textbf{34} (2012), no.~4, B447--B478.
\mbox{doi:10.1137/110840376}.

\bibitem{ArbogastWheeler1995}
T.~Arbogast and M.~F. Wheeler,
\emph{A characteristics-mixed finite element method for advection-dominated
transport problems},
SIAM J.\ Numer.\ Anal.\ \textbf{32} (1995), no.~2, 404--424.
\mbox{doi:10.1137/0732017}.

\bibitem{BuckleyLeverett1942}
S.~E. Buckley and M.~C. Leverett,
\emph{Mechanism of fluid displacement in sands},
Trans.\ AIME \textbf{146} (1942), 107--116.
\mbox{doi:10.2118/942107-G}.

\bibitem{CRHE1990}
M.~A. Celia, T.~F. Russell, I.~Herrera, and R.~E. Ewing,
\emph{An Eulerian--Lagrangian localized adjoint method for the
advection--diffusion equation},
Adv.\ Water Resour.\ \textbf{13} (1990), no.~4, 187--206.
\mbox{doi:10.1016/0309-1708(90)90041-2}.

\bibitem{ChenFrid1999}
G.-Q. Chen and H.~Frid,
\emph{Divergence-measure fields and hyperbolic conservation laws},
Arch.\ Ration.\ Mech.\ Anal.\ \textbf{147} (1999), 89--118.

\bibitem{ColellaWoodward1984}
P.~Colella and P.~R. Woodward,
\emph{The piecewise parabolic method (PPM) for gas-dynamical simulations},
J.\ Comput.\ Phys.\ \textbf{54} (1984), no.~1, 174--201.
\mbox{doi:10.1016/0021-9991(84)90143-8}.

\bibitem{DOS2003}
V.~G. Danilov, G.~A. Omel'yanov, and V.~M. Shelkovich,
\emph{Weak asymptotics method and interaction of nonlinear waves},
in: M.~Karasev (ed.), Asymptotic Methods for Wave and Quantum Problems,
Amer.\ Math.\ Soc.\ Transl.\ Ser.~2, vol.~208, American Mathematical
Society, Providence, RI, 2003, 33--165.
\mbox{doi:10.1090/trans2/208/02}.

\bibitem{DanilovShelkovich2005}
V.~G. Danilov and V.~M. Shelkovich,
\emph{Dynamics of propagation and interaction of $\delta$-shock waves in
conservation law systems},
J.\ Differential Equations \textbf{211} (2005), no.~2, 333--381.
\mbox{doi:10.1016/j.jde.2004.12.011}.

\bibitem{DFP1997}
J.~Douglas, Jr., F.~Furtado, and F.~Pereira,
\emph{On the numerical simulation of waterflooding of heterogeneous petroleum
reservoirs},
Comput.\ Geosci.\ \textbf{1} (1997), no.~2, 155--190.
\mbox{doi:10.1023/A:1011565228179}.

\bibitem{DHP1999}
J.~Douglas, Jr., C.-S. Huang, and F.~Pereira,
\emph{The modified method of characteristics with adjusted advection},
Numer.\ Math.\ \textbf{83} (1999), 353--369.
\mbox{doi:10.1007/s002110050453}.

\bibitem{DPY2000}
J.~Douglas, Jr., F.~Pereira, and L.-M. Yeh,
\emph{A locally conservative Eulerian--Lagrangian numerical method and its
application to nonlinear transport in porous media},
Comput.\ Geosci.\ \textbf{4} (2000), 1--40.
\mbox{doi:10.1023/A:1011551614492}.

\bibitem{DPY2000b}
J.~Douglas, Jr., F.~Pereira, and L.-M. Yeh,
\emph{A locally conservative Eulerian--Lagrangian method for flow in a porous
medium of a mixture of two components having different densities},
in: Z.~Chen, R.~E. Ewing, and Z.-C. Shi (eds.), Numerical Treatment of
Multiphase Flows in Porous Media, Lecture Notes in Physics, vol.~552,
Springer, Berlin, Heidelberg, 2000, 138--155.

\bibitem{DouglasHuang2001}
J.~Douglas, Jr.\ and C.-S. Huang,
\emph{A locally conservative Eulerian--Lagrangian finite difference method
for a parabolic equation},
BIT Numer.\ Math.\ \textbf{41} (2001), no.~3, 480--489.

\bibitem{HirtAmsdenCook1974}
C.~W. Hirt, A.~A. Amsden, and J.~L. Cook,
\emph{An arbitrary Lagrangian--Eulerian computing method for all flow speeds},
J.\ Comput.\ Phys.\ \textbf{14} (1974), no.~3, 227--253.
\mbox{doi:10.1016/0021-9991(74)90051-5}.

\bibitem{DouglasRussell1982}
J.~Douglas, Jr.\ and T.~F. Russell,
\emph{Numerical methods for convection-dominated diffusion problems based on
combining the method of characteristics with finite element or finite
difference procedures},
SIAM J.\ Numer.\ Anal.\ \textbf{19} (1982), no.~5, 871--885.
\mbox{doi:10.1137/0719063}.

\bibitem{Kruzkov1970}
S.~N. Kruzhkov,
\emph{First order quasilinear equations in several independent variables},
Math.\ USSR Sb.\ \textbf{10} (1970), no.~2, 217--243.
\mbox{doi:10.1070/SM1970v010n02ABEH002156}.

\bibitem{LighthillWhitham1955}
M.~J. Lighthill and G.~B. Whitham,
\emph{On kinematic waves. II. A theory of traffic flow on long crowded roads},
Proc.\ Roy.\ Soc.\ London Ser.\ A \textbf{229} (1955), 317--345.
\mbox{doi:10.1098/rspa.1955.0089}.

\bibitem{MPS2007b}
S.~Mancuso, F.~Pereira, and G.~de Souza,
\emph{Adaptividade de malhas na aproxima\c{c}\~ao lagrangeana de leis de
conserva\c{c}\~ao} [Mesh adaptivity in the Lagrangian approximation of
conservation laws],
TEMA Tend.\ Mat.\ Apl.\ Comput.\ \textbf{8} (2007), no.~2, 269--276
(in Portuguese). \mbox{doi:10.5540/tema.2007.08.02.0269}.

\bibitem{MPS2007}
S.~Mancuso, F.~Pereira, and G.~de Souza,
\emph{Um novo m\'etodo euleriano-lagrangeano para aproxima\c{c}\~ao de leis de
conserva\c{c}\~ao} [A new Eulerian--Lagrangian method for the approximation of
conservation laws],
TEMA Tend.\ Mat.\ Apl.\ Comput.\ \textbf{8} (2007), no.~2, 277--286
(in Portuguese). \mbox{doi:10.5540/tema.2007.08.02.0277}.

\bibitem{Richards1956}
P.~I. Richards,
\emph{Shock waves on the highway},
Oper.\ Res.\ \textbf{4} (1956), no.~1, 42--51.
\mbox{doi:10.1287/opre.4.1.42}.

\bibitem{vanLeer1979}
B.~van Leer,
\emph{Towards the ultimate conservative difference scheme. V. A second-order
sequel to Godunov's method},
J.\ Comput.\ Phys.\ \textbf{32} (1979), no.~1, 101--136.
\mbox{doi:10.1016/0021-9991(79)90145-1}.

\bibitem{WileyEthics}
John Wiley \& Sons,
\emph{Best Practice Guidelines on Publishing Ethics: A Publisher's
Perspective}, 2nd ed., 2014; see also Wiley's current publication ethics
guidelines for journals,
\url{https://authors.wiley.com/ethics-guidelines/}.

\end{thebibliography}
\end{document}